\newtheorem{definition}{Definition}[section]
\newtheorem{example}{Example}[section]
\newtheorem{lemma}{Lemma}[section]
\newtheorem{theorem}{Theorem}[section]
\newcommand{\ignore}[1]{}
\newcommand{\f}{\mathit{f}}
\newcolumntype{C}{>{$}c<{$}}
\newcolumntype{L}{>{$}l<{$}}
\newcolumntype{R}{>{$}r<{$}}
\newcommand\fano{
        \draw[ultra thick,color=orange] (4,{4*sqrt(3)/3}) circle ({4*sqrt(3)/3}) {};
        \node[draw,circle] (n3) at (0,0) {$3$};
        \node[draw,circle,fill=white] (n5) at (4,0) {$5$};
        \node[draw,circle] (n6) at (8,0) {$6$};
        \node[draw,circle,fill=white] (n2) at (2,{2*sqrt(3)}) {$2$};
        \node[draw,circle,fill=white] (n7) at (6,{2*sqrt(3)}) {$7$};
        \node[draw,circle] (n1) at (4,{4*sqrt(3)}) {$1$};
        \node[draw,circle] (n4) at (4,{4*sqrt(3)/3}) {$4$};
        
        \draw[ultra thick,color=blue] (n3) -- (n5) -- (n6);
        \draw[ultra thick,color=purple] (n2) -- (n4) -- (n6);
        \draw[ultra thick,color=green!50] (n3) -- (n2) -- (n1);
        \draw[ultra thick,color=blue!50] (n1) -- (n7) -- (n6);
        \draw[ultra thick,color=green] (n3) -- (n4) -- (n7);
        \draw[ultra thick,color=red] (n1) -- (n4) -- (n5);
}
\newcommand\brackets[3]{
    \foreach \x [count=\i] in {#3} {
        \node[draw,circle,label={270:$\x$}] (n\i) at ({#2*\i},0) {};
    }         
        \node[circle,fill=black] (n10) at ({1.5*#2},1) {};
        \node[circle,fill=black] (n11) at ({3.5*#2},1) {};
        \node[circle,fill=black] (n12) at ({5.5*#2},1) {};
        \node[circle,fill=black] (n13) at ({7.5*#2},1) {};
        \node[circle,fill=black] (n20) at ({2.5*#2},2.5) {};
        \node[circle,fill=black] (n21) at ({6.5*#2},2.5) {};
        \node[circle,fill=black] (n30) at ({4.5*#2},4.5) {};

        \foreach \x/\y/\z in {0/1/2,1/3/4,2/5/6,3/7/8} {
            \draw[thick, color=gray] (n1\x) -- (n\y);
            \draw[thick, color=gray] (n1\x) -- (n\z);
        }
        \foreach \x/\y/\z in {0/0/1,1/2/3} {
            \draw[densely dashed, color=green!50!red] (n2\x) -- (n1\y);
            \draw[densely dashed, color=green!50!red] (n2\x) -- (n1\z);
        }
        \draw[loosely dashed, color=black] (n30) -- (n20);
        \draw[loosely dashed, color=black] (n30) -- (n21);

}
\begin{document}

\date{November 2022}

\title{How to Design a Stable \\ Serial Knockout Competition}

\author[1]{Roel Lambers\thanks{r.lambers@tue.nl}}
\author[1]{Rudi Pendavingh}
\author[1]{Frits Spieksma}

\affil[1]{Department of Mathematics \& Computer Science, Eindhoven University of Technology, Eindhoven, Netherlands}

\maketitle

\begin{abstract}
We investigate a new tournament format that consists of a series of individual knockout tournaments; we call this new format a Serial Knockout Competition (SKC). This format has recently been adopted by the Professional Darts Corporation. Depending on the seedings of the players used for each of the knockout tournaments, players can meet in the various rounds (eg first round, second round, ..., semi-final, final) of the knockout tournaments. Following a fairness principle of treating all players equal, we identify an attractive property of an SKC: each pair of players should potentially meet equally often in each of the rounds of the SKC. If the seedings are such that this property is indeed present, we call the resulting SKC {\em stable}. In this note we formalize this notion, and we address the question: do there exist seedings for each of the knockout tournaments such that the resulting SKC is stable?

We show, using a connection to the Fano plane, that the answer is yes for 8 players. We show how to generalize this to any number of players that is a power of 2, and we provide stable schedules for competitions on 16 and 32 players.
\end{abstract}
\newpage

\section{Introduction}
\label{sec:intro}
Two popular tournament formats are the round robin format and the knockout format. In a round robin format, each pair of players (or teams) meet a given number of times. 
In a knockout tournament, starting from a so-called {\em seeding}, each round of the knockout tournament sees matches between all remaining players, and a player is removed from the tournament after losing a match; in this way, after $\mbox{log } n$ rounds a winner is determined (where $n$ is the number of players). 

Each of these formats has been studied intensely from very different viewpoints. In particular, deciding upon a seeding of the players in a single knockout tournament has attracted a lot of attention; we do not aim to review this field, and simply refer to \cite{Ho+Ri1985}, \cite{Vu2010}, \cite{Vu+Shoham2011}, \cite{Groh2012}, \cite{Aziz2014}, \cite{Karpov2016}, \cite{Pa+Su2022}, and the references contained therein for more information on this subject. Most of this literature assumes that probabilities are given that denote the chance of one player beating the other. 

In practice, it is not uncommon to design a tournament combining both formats: for instance, first have a number of round robin tournaments in parallel, and then let the winners of the round robins participate in a knockout tournament.

In this note we study a new format that can be seen as an alternative combination of a knockout tournament and a round robin tournament. Let the number of players $n$ be equal to $2^{k}$ for some $k \geq 2$, allowing us to focus exclusively on so-called {\em balanced} knockout tournaments, i.e., knockout tournaments where each player has to play the same number of matches to win the tournament. Observe that a balanced knockout tournament consists of $k$ successive {\em rounds}, where in round $i$ the remaining $2^{k+1-i}$ players compete, $i=1, \ldots,k$. 

The competition format we study consists of a set of $2^{k}-1$ knockout tournaments. We will call this format a {\em Serial Knockout Competition}, or SKC for short. Related (but different) formats are the so-called quasi-double knockout tournament (\cite{Considine2018}) and the multiple-elimination knockout tournament (\cite{Fayers2005}). The problem that we analyze in this note is to specify, for each of the individual knockout tournaments that make up the SKC, the {\em seeding}; these seedings specify, for each player, the leaf nodes of the underlying knockout trees to which the player is assigned, see Figure~\ref{fig:tree+seedings} for an example of a single knockout tournament. 

\begin{figure}[!h]
    \centering
    \begin{tikzpicture}
        \brackets{8}{1}{0,1,4,5,2,3,6,7}
    \end{tikzpicture}
    \caption{A single knockout $T$ where players $0,1, \ldots,7$ are assigned to the leaf nodes, leading to the seeding $s=0145-2367$.}
    \label{fig:tree+seedings}
\end{figure}
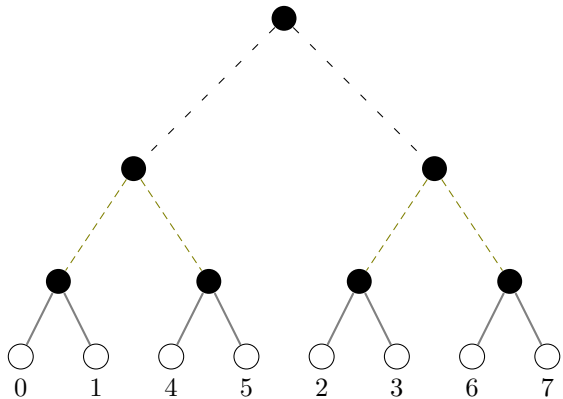

Once the seedings are specified, the individual knockout tournaments of the SKC can unfold - no other decisions in the design of the competition need to be taken. We refer to specifying the seedings as the {\em design} of the SKC.

In this note, we do not deal with determining the winner of an SKC; instead, we focus on the question: how to design an SKC in a fair way?


Here, we interpret fair by asking for a design that (i) treats all players equal without any prior assumptions on the strenghts of the players, and (ii) each pair of players should meet equally often in each of the rounds of an SKC.


One could argue that simply picking random seedings leads to a fair SKC as each player, in expectation, meets each other player equally often. However, it is clear that due to the inherent variability of picking random seedings, a design is found that violates these conditions.

Thus, we aim to find seedings such that, over the SKC, each pair of players meets equally often in all rounds. Consider for instance the first round: as the SKC consists of $2^k-1$ knockout tournaments, each player plays $2^k-1$ first round matches. Hence, we want to find seedings such that each player meets each other player exactly once in a first round. More generally, the question is: do there exist seedings such that each pair of players meets equally often in each of the rounds of the SKC?

We capture this notion formally by defining the notion of {\em stability} of an SKC.


\begin{definition}
\label{def:level}
Given a knockout tournament $T$ for $n=2^k$ players, we say that $v_T(x,x') = i$ if players $x,x'$ can meet in round $i$ of that tournament, $i=1, \ldots, k$. 
\end{definition}

The phrase `can meet' in the above definition refers to the assumption that players $x$ and $x'$ win their matches in the rounds prior to their encounter. For instance, in Figure~\ref{fig:tree+seedings}, players 1 and 4 can meet in Round 2, while players 0 and 3 can meet in Round 3, the final.

Let us now formally define the concept of stability, where we use $\#S$ to denote the number of elements of a finite set $S$.

\begin{definition}
\label{def:stable}
Given a set of knockout tournaments $\mathcal{T}$ on $n=2^k$ players, we say that it is {\em stable in round $i$} if there is a number $c_i$  so that
$$\#\{T\in\mathcal{T}: v_T(x,x')=i\} = c_i$$ for all pairs of distinct players $x,x'$. 
We say that the set $\mathcal{T}$ is {\em stable} if it is stable in all rounds $i=1,\dots,k$.
\end{definition}

Observe that the expression $\#\{T\in\mathcal{T}: v_T(x,x')=i\}$ counts the tournaments $T$ from the set $\mathcal{T}$ such that players $x$ and $x'$ can meet at round $i$ in $T$, $1 \leq i \leq k$.

\begin{definition}
We define a Serial Knockout Competition (SKC) as a competition for $n=2^k$ players consisting of $n-1$ knockout tournaments. 
\end{definition}

Notice that in an individual knockout tournament $T$, a player can meet any of $2^{i-1}$ other players when reaching round $i$, i.e., for each player $x$, we have $\#\{x' : v_T(x,x') = i\} = 2^{i-1}$, $i=1, \ldots, k$. As an SKC consists of $2^k-1$ knockout tournaments, the number of meetings that are possible in round $i$ for any player is given by $(2^k-1)2^{i-1}$, $1 \leq i \leq k$. With the number of opponents of any player $x$ equal to $n-1 = 2^k-1$, an SKC is stable in round $i$ if $c_i = 2^{i-1}$, for $i=1, \ldots, k$.

In this note, we prove that stable SKC's exist for arbitrary $n=2^k$. We describe in Section~\ref{sec:motivation} the case that motivates this work. In Section~\ref{sec:n=8} we investigate the case of 8 players, and in Section~\ref{sec:general} we deal with the general case. We illustrate in Section~\ref{sec:1632} the cases of 16 and 32 players, and we close in Section~\ref{sec:conclusion}.

\subsection{Motivation: The Premier League of Darts}
\label{sec:motivation}

The motivation for investigating this particular tournament design comes from the Professional Darts Corporation (PDC). We now describe this competition in more detail.

The Premier League of Darts, organized by the PDC, is an annual competition where the best darts players of the world compete over several months for the title. This year's edition featured the best 8 players, started at February 3, 2022, and ended at June 13, 2022. Total prize money is £1.000.000, and the winner pockets £275.000. The concept of the league changed drastically compared to the previous years – this edition consists of 16 knockout tournaments. Thus, there is a winner for each of these knockout tournaments, and, importantly, in every single match there is something to play for, which adds to the excitement of the format. 

The 16 knockout tournaments are structured in the following way: the first 7 knockout tournaments have a predetermined seeding, then there is a special knockout tournament, again 7 knockout tournaments with a given seeding, and a last special knockout tournament. The seedings in the special knockout tournaments depend on the standings at that time. The other (regular) knockout tournaments have a fixed seeding that is determined in advance by the PDC. Our analysis focuses on the seedings in these regular knockout tournaments. The first 7 knockout tournaments, as well as the second 7 regular knockout tournaments, each correspond to an SKC. 

As far as we are aware, this is the first occurence of an SKC in practice. One reason explaining why an SKC format is not being used more often in practice is the fact that knockout tournaments are used when a match is physically (or otherwise) demanding, and one wants to have relatively few matches to determine a winner. As an SKC requires multiple knockouts, it does not constitute a format with few matches. However, this argument does not apply when the tournament can be organized over a relatively long time period (as in the case of the PDC), and it also does not apply in the domain of e-sports as these require little (physical) effort. E-sports are a fast growing domain with an enormous amount of competitions being organized. We expect that the format of an SKC, or variations thereof, will turn out to be useful and popular in e-sports, as it combines the excitement of a knockout format with the fairness of a round robin format.


\section{Constructing a stable SKC when $n=8$}
\label{sec:n=8}



In this section, we are going to construct a stable SKC tournament $\mathcal{T} = (T_r)_{r \leq 7}$ for $8$ players; this analysis applies directly to the situation encountered by the PDC (see Section~\ref{sec:motivation}). Each knockout tournament is specified by providing a \textit{seeding} $s$, i.e., an ordered permutation of the players $0, \ldots, 7$. In Figure~\ref{exa:seeding} it is shown how to make a knockout tree out of the seeding $s = 01452367$. Although the permutation itself holds all the information needed, we may place hyphens as a visual aid indicating the halves of the seeding:  $0145-2367$ instead of $01452367$.

\begin{example}
\label{exa:seeding}
The permutation $0145-2367$ corresponds to the tree in Figure~\ref{exa:seeding}.
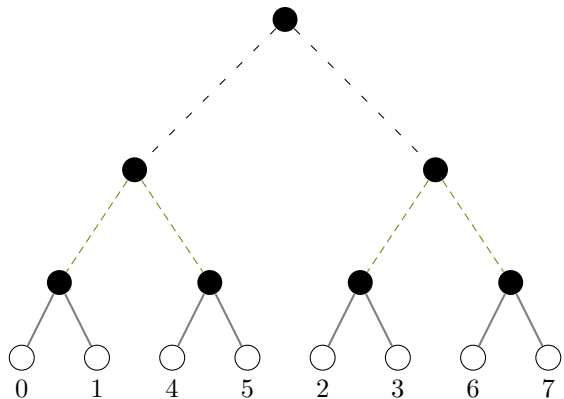
\begin{figure}[!h]
    \centering
    \begin{tikzpicture}
        \brackets{8}{1}{0,1,4,5,2,3,6,7}
    \end{tikzpicture}
    \caption{Knockout tree $T$ with seeding $s=0145-2367$.}
    \label{fig:seedings}
\end{figure}

\end{example}

As for the construction, we first simply state a stable SKC in Table \ref{tbl:stable8}, after which we give a method to generate such a set of seedings. 
\begin{table}[!h]
    \centering
    \begin{tabular}{|cccc|} 
    \hline
         Knockout & Seeding & Node & Line\\ 
         Tournament & & & \\ \hline
         1 & 0145-2367 & 1 & Red \\
         2 & 0426-1537 & 4 & Purple \\
         3 & 0213-4657 & 2 & Light green \\
         4 & 0356-1247 & 3 & Blue \\
         5 & 0527-1436 & 5 & Orange \\
         6 & 0734-1625 & 7 & Green \\
         7 & 0617-2435 & 6 & Light blue \\
    \hline
    \end{tabular}
    \caption{Seedings for a stable SKC.}
    \label{tbl:stable8}
\end{table}

In Table \ref{tbl:stable8}, the last two columns refer to nodes and lines. These nodes and lines are elements of the Fano-plane used to get to these seedings. This plane is depicted in Figure $\ref{fig:fanoplane}$, where the players $1$ to $7$ are placed on the seven nodes. We construct a seeding in the following way:

\begin{itemize}
    \item Select a node $x \in \{1,\ldots,7\}$. This indicates that Player $0$ meets Player $x$ in the first knockout tournament. In case $x=1$, we have a partial seeding $s=01\dots$.
    \item Select a line that goes through node $x$. The players corresponding to the two other nodes on the line meet each other. In case $x=1$, if we select the red line, then players $4,5$ meet and we extend the partial seeding to $s=0145\dots$.
    \item The remaining two matches are given by the two non-selected lines through node $x$. The two players on each line respectively, meet each other. This means that, in case $x=1$, players $2,3$ (light green) and $7,6$ (light blue) meet in the first knockout tournament. The resulting seeding for the first knockout tournament is thus given by $0145-2376$.
\end{itemize}

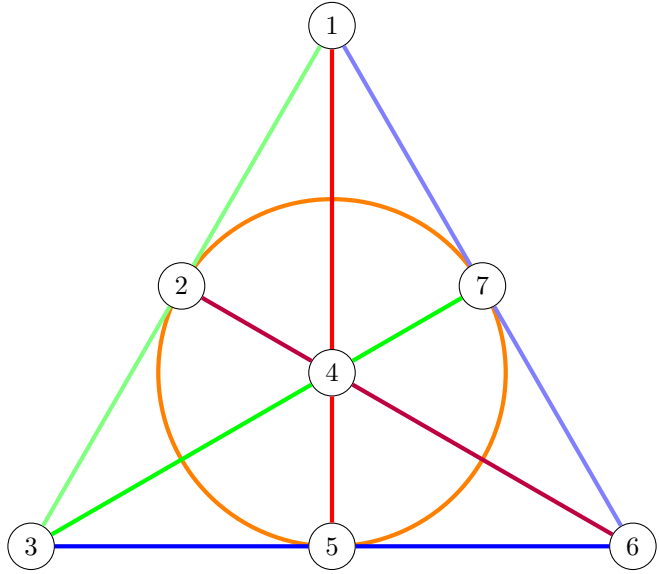
\begin{figure}[!h]
    \centering
    \begin{tikzpicture}
        \fano
    \end{tikzpicture}
    \caption{The Fano-plane used to construct \Cref{tbl:stable8}}
    \label{fig:fanoplane}
\end{figure}

A routine verification shows that the knockout tournament arising from a node and a line has the following key property.
\begin{lemma} \label{lem:level}Let $T$ be the knockout tournament that arises from the node-line pair $x,\ell$ of the Fano plane, and let $y$ be a node of the Fano plane. Then 
\begin{itemize}
\item $v_T(0,y)=1$ if and only if $y=x$,
\item $v_T(0,y)=2$ if and only if $y\in \ell$ and $y\neq x$, and
\item $v_T(0,y)=3$ if and only if $y\not\in \ell$.
\end{itemize}
Moreover, if $\ell'=\{y,x, x'\}$ is any line of the Fano plane containing the node $y$, then $v_T(x,x')=v_T(0,y)$.
\end{lemma}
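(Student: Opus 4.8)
The plan is to read everything directly off the binary knockout tree determined by the seeding, and then to isolate one structural symmetry that makes the \emph{moreover} clause almost free. First I would write the seeding explicitly. Writing $\ell=\{x,a,b\}$ and letting $\{x,c,d\}$ and $\{x,e,f\}$ be the other two lines through $x$, the construction produces the seeding $s = 0\,x\,a\,b\,c\,d\,e\,f$, so the four complementary nodes $\{c,d,e,f\}$ are exactly the nodes \emph{not} on $\ell$ (the three lines through $x$ cover all seven nodes and split the other six into one pair on $\ell$ and two pairs off $\ell$). Reading off the bracket structure (as in Figure~\ref{fig:seedings}), the round-$1$ blocks are $\{0,x\},\{a,b\},\{c,d\},\{e,f\}$; the first-half round-$2$ match pits $\{0,x\}$ against $\{a,b\}$; and the round-$3$ split is the first half $\{0,x,a,b\}$ against the second half $\{c,d,e,f\}$. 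The three bullet points are then immediate: player $0$'s round-$1$ partner is $x$; the only other players in $0$'s quarter-half are $a,b$ (that is, $\ell\setminus\{x\}$), whom $0$ can meet in round $2$; and the remaining players $c,d,e,f$, which are precisely the nodes off $\ell$, sit in the opposite half and so can meet $0$ only in round $3$.

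For the \emph{moreover} clause the key observation is a \textbf{sibling symmetry}: since $0$ and $x$ are round-$1$ partners, they are indistinguishable with respect to the round in which they meet any third player. Concretely, in a balanced knockout tree two leaves meet in the round indexed by the height of their lowest common ancestor, and for the siblings $0,x$ and any leaf $w\notin\{0,x\}$ one has $\mathrm{lca}(0,w)=\mathrm{lca}(x,w)$ (both equal the lca of $w$ with the common parent of $0$ and $x$). Hence $v_T(x,z)=v_T(0,z)$ for every node $z\neq x$; in particular $v_T(x,x')=v_T(0,x')$, so it remains only to prove $v_T(0,x')=v_T(0,y)$.

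This last equality is where the Fano incidence geometry enters, and it is the step I would treat most carefully. By the first three bullets, $v_T(0,\cdot)$ depends on a node only through its position relative to $\ell$: level $1$ for $x$, level $2$ for the two points of $\ell\setminus\{x\}$, and level $3$ for the four points off $\ell$. Now $\ell'=\{y,x,x'\}$ is a line through $x$, so I split into the two cases permitted by the Fano axioms. If $\ell'=\ell$, then $\{y,x'\}=\ell\setminus\{x\}$ and both $y,x'$ are at level $2$. If $\ell'\neq\ell$, then two distinct lines meet in exactly one point, forcing $\ell\cap\ell'=\{x\}$, so $y,x'\notin\ell$ and both are at level $3$. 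Either way $y$ and $x'$ lie in the same level class, giving $v_T(0,y)=v_T(0,x')=v_T(x,x')$.

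The verification is genuinely routine, so the only real obstacle is bookkeeping rather than depth: getting the seeding-to-tree translation right (so the three level classes come out as claimed), and reading the statement correctly, namely that the notation $\ell'=\{y,x,x'\}$ forces $\ell'$ to be the unique line through $y$ and $x$, with $x'$ its third point. Once the sibling symmetry is isolated, all the geometric content reduces to the single fact that two distinct Fano lines share exactly one point, so I would foreground that symmetry rather than grind through every node-line pair case by case.
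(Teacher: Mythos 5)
Your check of the three bullets is correct, and the sibling symmetry ($0$ and $x$ are round-one partners, hence $v_T(x,z)=v_T(0,z)$ for all $z\neq x$) is a genuinely nice observation. The gap lies in your reading of the \emph{moreover} clause. You take the $x$ in $\ell'=\{y,x,x'\}$ to be the defining node of the node-line pair, so your argument only ever treats pairs of players that \emph{contain} that node: sibling symmetry reduces $v_T(x,x')$ to $v_T(0,x')$, and your two-case incidence argument handles only lines through $x$. But the clause is intended — and, decisively, is \emph{used} in the paper's stability theorem for $n=8$ — with $x,x'$ denoting an arbitrary pair of distinct nonzero players and $y$ the third point of their common line (the paper overloads the letter $x$ here; the phrase ``\emph{any} line containing the node $y$'' also signals that $\ell'$ ranges over all three lines through $y$, not just the unique one through $y$ and the defining node). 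In that theorem the lemma is applied to one fixed pair $x,x'$ simultaneously in all seven tournaments $T_r$, whose defining nodes $x_r$ enumerate all seven nodes of the Fano plane; for the four tournaments whose defining node does not lie on the line $\{y,x,x'\}$, your argument says nothing.

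The missing cases are exactly the pairs of players avoiding the defining node, and closing them needs one more incidence fact. Write the seeding as $0,x,a,b,c,d,e,f$ with $\ell=\{x,a,b\}$ and $\{x,c,d\}$, $\{x,e,f\}$ the other lines through $x$, as in your proof. A line avoiding $x$ meets each of the three lines through $x$ in exactly one point, hence contains exactly one node from each of the round-one pairs $\{a,b\}$, $\{c,d\}$, $\{e,f\}$; one then checks directly that two such nodes taken from $\{c,d\}$ and $\{e,f\}$ meet in round $2$, matching $v_T(0,p)=2$ for the third point $p\in\{a,b\}$, while a node from $\{a,b\}$ and a node from the second half meet in round $3$, matching $v_T(0,q)=3$ for the third point $q$ off $\ell$. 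Finally, the pairs $\{a,b\}$, $\{c,d\}$, $\{e,f\}$ themselves (whose line's third point is $x$) meet in round $1$, matching $v_T(0,x)=1$. Adding these checks makes the proof complete in the strength the paper actually needs; since the paper itself dismisses the lemma as a ``routine verification'' and gives no proof, this corrected case analysis plus your sibling symmetry would stand as a full justification.
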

Notice that in Table \ref{tbl:stable8}, each node and each line of the Fano plane occur exactly once, and each node is on the corresponding line. The following theorem states that this construction is sufficient to obtain a stable SKC.

\begin{theorem}
Let $x_1,\ldots, x_7$ be an enumeration of the nodes and $\ell_1,\ldots, \ell_7$ be an enumeration of the lines of the Fano plane, such that $x_r \in \ell_r$ for $r=1,\ldots, 7$. Let $T_r$ be the the knockout tournament that arises from the the pair $x_r, \ell_r$. Then, the SKC defined by
${\mathcal T}:=\{T_1,\ldots, T_7\}$
is stable.
\end{theorem}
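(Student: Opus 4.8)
The plan is to verify stability directly against the target values $c_i = 2^{i-1}$ recorded in the text, splitting the set of unordered player pairs according to whether or not Player $0$ is involved. Throughout I will lean on Lemma~\ref{lem:level} together with the standard incidence facts about the Fano plane: it has $7$ nodes and $7$ lines, every line contains exactly $3$ nodes, and every node lies on exactly $3$ lines. The hypotheses of the theorem supply three bookkeeping facts I will use repeatedly: $x_1,\ldots,x_7$ hits each node exactly once, $\ell_1,\ldots,\ell_7$ hits each line exactly once, and $x_r\in\ell_r$ for every $r$.

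First I would handle the pairs of the form $(0,y)$ with $y\in\{1,\ldots,7\}$. Fix such a $y$ and count, over $r=1,\ldots,7$, how often $v_{T_r}(0,y)=i$ using the three bullet points of Lemma~\ref{lem:level}. For round $1$, I need $x_r=y$; since $x_1,\ldots,x_7$ enumerates all nodes, this happens for exactly one index, giving $c_1=1=2^0$. For round $3$, I need $y\notin\ell_r$; since $\ell_1,\ldots,\ell_7$ enumerates all lines and $y$ lies on exactly $3$ of them, this happens $7-3=4$ times, giving $c_3=4=2^2$. Round $2$ is the one step requiring care: I need $y\in\ell_r$ but $y\neq x_r$. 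Exactly $3$ indices satisfy $y\in\ell_r$, and I must subtract those with $x_r=y$. Here the incidence hypothesis does the work: the unique index $r_0$ with $x_{r_0}=y$ automatically has $y=x_{r_0}\in\ell_{r_0}$, so it is one of the three, and it is the only one removed, giving $3-1=2=2^1$.

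Second, I would reduce every pair $(p,q)$ with $p,q\in\{1,\ldots,7\}$ to the previous case. By the Fano plane there is a unique line through $p$ and $q$; let $y$ be its third node. The ``moreover'' clause of Lemma~\ref{lem:level} gives $v_{T_r}(p,q)=v_{T_r}(0,y)$ for every $r$, so the distribution of meeting-rounds for $(p,q)$ is identical to that for $(0,y)$, which the first case shows equals $(c_1,c_2,c_3)=(1,2,4)$. Since these are precisely the values $2^{i-1}$ identified as the requirement for stability, the SKC $\mathcal{T}$ is stable in every round $i=1,2,3$.

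The main obstacle is the round-$2$ count: it is the only place where the three structural hypotheses on the enumeration must genuinely interact. If the incidence condition $x_r\in\ell_r$ were dropped, the index realizing $x_r=y$ need not lie among the three lines through $y$, and the clean subtraction $3-1=2$ would break down. Everything else is routine bookkeeping with the fixed parameters of the Fano plane.
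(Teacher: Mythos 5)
Your proof is correct and follows essentially the same route as the paper: split pairs by whether Player 0 is involved, count rounds for $(0,y)$ via Lemma~\ref{lem:level} and the incidence hypothesis $x_r\in\ell_r$, then reduce general pairs to this case through the ``moreover'' clause. The only (immaterial) difference is that you count round 3 directly as $7-3=4$, whereas the paper observes that stability in round 3 follows automatically from stability in rounds 1 and 2.
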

\begin{proof} 
To show that $\mathcal{T}$ is stable, we need to show that 
\begin{equation}
\label{eq:theeq}
    \#\{T\in \mathcal{T}: v_T(x,x')=i\}=2^{i-1},
\end{equation}
for each pair of distinct players $x,x'$ and each round $i\in \{1,2,3\}$. Notice that $\mathcal{T}$ is stable in round $i=3$ if it is stable in both round $1$ and $2$.

We first consider the case that one of $x,x'$ is 0, say $\{x, x'\}=\{0,y\}$ for some $y\in \{1,\ldots, 7\}$.

\begin{itemize}
\item When $i=1$, our construction ensures that in each individual knockout tournament $r=r^y$, there exists a unique player $x_r=y$ meeting player 0. Hence, $\#\{T\in \mathcal{T}: v_T(0,y)=1\}=\#\{r: y=x_r\}=1$, and equation (\ref{eq:theeq}) is satisfied for $i=1$.

 

\item When $i=2$, we observe that there are exactly three lines through $y$, thus there exist two distinct knockout tournaments $r,r' \neq r^y$ such that $y \in \ell_r,\ell_r'$ - meaning that $(0,y)$ can meet in round $2$ in those knockout tournaments. Thus: $\#\{T\in \mathcal{T}: v_T(0,y)=2\}=\#\{r: y\in \ell_r, y\neq x_r\}=2$, and equation (\ref{eq:theeq}) is satisfied for $i=2$.
\end{itemize}
This settles the case where one player is Player 0. Next, suppose $x,x'$ are distinct players, both not $0$. Then, the Fano plane contains a unique node $y$ and line $\ell' = \{y,x,x'\}$ through $x,x'$. By Lemma \ref{lem:level}, we have $v_T(x,x')=v_T(0,y)$ for each $T\in \mathcal{T}$. As $\#\{T \in \mathcal{T} : v_T(0,y) = i\} = 2^{i-1}$ for all $y$, this holds for any distinct pair $x,x'$, for $i=1,2,3$.

The theorem follows.
\end{proof}

We point out that, from the viewpoint of stability, the sequence with which the individual knockout tournaments are played, is irrelevant.

\section{Constructing a stable SKC}
\label{sec:general}
Here we generalize the node-line construction used in Section~\ref{sec:n=8} to find a stable SKC for $n=2^k$ players. In Section~\ref{sec:keyidea}, we describe the basic idea, and in Section~\ref{sec:galois} we make a connection to Galois fields. We use this connection in Section~\ref{sec:result} to prove our main result: Theorem~\ref{th:main}.

\subsection{The basic idea}
\label{sec:keyidea}
The key idea that we will carry over to the general setting, is that we will construct our knockout tournaments in a restricted way, so that for each pair of players $x,x'$, there is a well-defined player $y$ such that $$v_T(x,x')=v_T(0,y)$$
for all knockout tournaments $T$ of this restricted form. 
Showing that an SKC $\mathcal{T}$ is stable, where each tournament $T\in \mathcal{T}$ is of this special form, then reduces to verifying that
$$\#\{T\in \mathcal{T}: v_T(0,y)\}=2^{i-1}$$
for each player $y$ and each round $i$, $i=1, \ldots,k$.

To define the representative $y$ of a pair of players $x,x'$ and to create the special tournaments $T$, we need additional structure on the set of players. For the case $n=8$, we identified the non-zero players with nodes of the Fano plane and used its geometry to define the tournaments. In what follows, we will identify the $n=2^k$ players with the $2^k$ elements of the {\em Galois field} $GF(2^k)$. 

As $GF(2^k)$ is a field, both addition and multiplication are possible operations on its elements. We construct a tournament $T$ such that for $x,x' \in GF(2^k)$, we have \begin{align*}v_T(x,x')=v_T(0,y)\end{align*}
when $y:=x-x'$. 

After we have constructed a base model for our knockout tournament, we use the multiplication in $GF(2^k)$ on $T$, to create tournaments $T(z)$ for each nonzero element $z$ of $GF(2^k)$, and argue that $$\mathcal{T}:=\{T(z): z\neq 0\}$$ is a stable SKC.

\subsection{The connection to Galois fields}
\label{sec:galois}
To exploit the structure of Galois field $GF(2^k)$, we first have to describe $GF(2^k)$. Although we do not go into too much detail, we point out the main properties that we use. For an accessible introduction to finite fields,  see \cite{FF2022}.\\

\noindent A {\em binary polynomial} $q\in\mathbb{Z}_2[X]$ is an expression of the form $$q = q_kX^k + \dots q_1X + q_0$$
where the coefficients $q_i$ are either $0$ or $1$. Such polynomials may be added and multiplied as usual, but taking into account that the coefficients are added according to the rule $1+1=0$. So e.g.
$$(X+1)\cdot(X^2+X+1)=X^3+X^2+X^2+X+ X+1=X^3+1$$

The degree of a polynomial  $q=\sum_i q_i x^i$ is the highest value of $i$ so that $q_i\neq 0$.
The polynomial $q=X^3+1$ that is the outcome of the above calculation is {\em reducible}, because it has degree 3 and is the product of two polynomials of strictly lower degree, resp. $X+1$ of degree 1 and $X^2+X+1$ of degree 2. 
For any value of $k$, irreducible  polynomials $q\in\mathbb{Z}_2[X]$ are guaranteed to exist. For example, when $k=3$, the polynomial $q = X^3 + X^2 + 1$ is irreducible over $\mathbb{Z}_2[X]$. Other irreducible polynomials of small degree are $X^2+X+1, X^4+X+1, X^5+X^2+1$ for degree $k=2,4,5$ respectively.
  
Given any polynomial $q\in \mathbb{Z}_2[X]$ , we write $\mathbb{Z}_2[X]/(q)$ for the set of polynomials one gets from a polynomial in $\mathbb{Z}[X]$ by filling in a symbolic value $\alpha$ that is assumed to satisfy $q(\alpha)=0$. 
If $q=X^2+X+1$, then the element $x=\alpha^3\in\mathbb{Z}_2[X]$ can be rewritten as
$$x=\alpha^3=\alpha^3+\alpha\cdot q(\alpha)=\alpha^3+\alpha\cdot(\alpha^2+\alpha+1)=\alpha^2 +\alpha=\alpha^2 +\alpha+q(\alpha)=1$$
because $q(\alpha)=0$. Indeed, any element $x\in \mathbb{Z}_2[X]/(q)$ can be rewritten to $x= x_{k-1} \alpha^{k-1}+\cdots+x_1\alpha+x_0$, that is, without using powers $\alpha^i$ with $i\geq k$ in the expression.

If $q \in \mathbb{Z}_2[X]$  is an  {\em irreducible} polynomial of degree $k$, it is known that $GF(2^k) \cong \mathbb{Z}_2[X]/(q)$ is a {\em field}: one can add and multiply with its elements, but also divide by any nonzero element. Indeed, consider that in the above example with $q=X^2+X+1$, we had $\alpha\cdot \alpha^2=\alpha^3=1$. Then $\alpha^{-1}=\alpha^2$, and dividing by $\alpha$ amounts to multiplying with $\alpha^2$. The irreducibility of $q$ ensures that for any nonzero $x\in GF(2^k)$ there is a $y\in GF(2^k)$ so that $x\cdot y=1$. Then a division by $x$ can be executed as a multiplication by $y$.

There is more than one irreducible polynomial $q$ of each degree $k$, but whichever one uses, the outcome is mathematically `the same` field $GF(2^k)$. Having fixed a polynomial $q$ for the construction of the Galois field $GF(2^k)$, there is just one way to write an element $x\in GF(2^k)$ as
$x = \sum_{i=0}^{k-1} x_i\alpha^i \in GF(2^k)$, and we may define the {\em degree} of $x$ as $d(x) = \max\{i : x_i \neq 0\}$. 

This degree leads us to the following lemma on the existence of a tournament $T$ with the nice property that $v_T(x,y) = v_T(0,x-y) = 1 + d(x-y)$.

\begin{lemma} There is a knockout tournament $T$ whose players are the elements of $GF(2^k)$, so that $v_T(x,y)=1+d(x-y)$ for all $x,y\in GF(2^k)$. 
\label{lem:knockout}
\end{lemma}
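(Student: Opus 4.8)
The plan is to realize the algebraic degree function $d$ directly as the level structure of the \emph{standard} balanced bracket on $2^k$ leaves, after relabelling the leaves by elements of $GF(2^k)$ in a way dictated by the coordinate expansion $x=\sum_{i=0}^{k-1}x_i\alpha^i$. The first step is to record two elementary facts. Since $GF(2^k)$ has characteristic $2$, we have $x-y=x+y$, and in coordinates $(x+y)_i=x_i+y_i$ vanishes exactly when $x_i=y_i$. Hence $d(x-y)$ is simply the largest index $i$ at which the coordinate vectors of $x$ and $y$ disagree; equivalently, $1+d(x-y)$ is one more than the most significant coordinate in which $x$ and $y$ differ. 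This reduces the lemma to building a bracket whose meeting round for a pair is governed by the top differing coordinate.

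Next I would fix the construction. Recall that in any balanced knockout bracket the round $v_T(x,y)$ in which two players can meet equals the height of the least common ancestor of their leaves, i.e. round $i$ means that $x,y$ lie in a common block of $2^i$ leaves but not in a common block of $2^{i-1}$ leaves. The natural choice is to split the players at the root according to the top coordinate $x_{k-1}$: one half is the subspace $H_0=\{x:x_{k-1}=0\}$ and the other half is its translate $H_1=H_0+\alpha^{k-1}$. Inside each half I recurse on the remaining coordinates $x_0,\dots,x_{k-2}$, which is again a bracket of the same type on $2^{k-1}$ players. Equivalently, one can state the seeding in closed form: assign player $x$ to the leaf whose root-to-leaf address is the bit string $(x_{k-1},x_{k-2},\dots,x_0)$.

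Then I would verify the identity $v_T(x,y)=1+d(x-y)$ by induction on $k$, or directly from the closed-form seeding. For the induction, the base case $k=1$ is immediate. For the step, two players in different halves differ in coordinate $k-1$, so $d(x-y)=k-1$ and they can meet only in the final, round $k$, as required; two players in the same half agree in coordinate $k-1$, their difference lies in $H_0$, and the inductive hypothesis applied to the sub-bracket gives $v_T(x,y)=1+d(x-y)$. The only point needing a word of care is the half $H_1$: since $v_T$ depends only on the difference $x-y$ and differences are invariant under the translation $x\mapsto x+\alpha^{k-1}$ that identifies $H_1$ with $H_0$, the same sub-bracket works verbatim. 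The closed-form route instead observes that the height of the least common ancestor of two leaves is $k$ minus the length of the longest common prefix of their addresses, and that this prefix length equals $k-1$ minus the top differing coordinate, which again yields $v_T(x,y)=1+d(x-y)$.

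I do not expect a genuine obstacle here; the content is a clean dictionary between the degree of a field element and the height of a least common ancestor in a complete binary tree. The only things to get right are bookkeeping: the reversal of coordinate order (most significant coordinate at the top of the tree), the characteristic-$2$ identity $x-y=x+y$, and the translation invariance of $v_T$ that lets the recursion treat both halves identically. Once the labelling convention is fixed, the verification is routine.
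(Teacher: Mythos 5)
Your proposal is correct and is essentially the paper's own argument: the paper builds the same tournament bottom-up, forming $T_{m+1}$ by merging $T_m$ (players of degree $<m$) with its translate $T_m+\alpha^m$ and letting the two winners meet in round $m+1$, which is exactly your root-split recursion read in the opposite direction, using the same two ingredients (translation invariance of differences in characteristic $2$, and the fact that cross-half pairs have top differing coordinate forcing the last round). Your closed-form leaf-address/least-common-ancestor reformulation is a pleasant restatement but does not change the substance.
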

\begin{proof} 
We construct tournament $T$ by inductively constructing $T_m$ for incremental values $m=1,\dots,k$, where each $T_m$ is a knockout tournament on the set $P_m = \{x \in GF(2^k) : d(x) < m\}$, and all the $T_m$ have the property that $v_{T_m}(x,y) = 1 + d(x-y)$ for $x,y \in P_m$. Then $T=T_k$ proves the lemma.

When $m=1$, the set $P_0=\{0,1\}$ contains only two players, and the unique tournament $T_1$ one can construct on these two players has $v_{T_1}(0,1)=1=1+d(1-0)$.

As induction step, assume that $T_m$ exists such that $v_{T_m}(x,y)=1+d(x-y)$ for all $x,y\in P_m$. Let $T_m'$ arise from a copy of $T_m$ by adding $\alpha^m$ to each player. Then $T_m'$ has players $P_m'=\{x+\alpha^m: x\in P_m\}$ and for any two players $x',y' \in P'_m$ we have
\begin{align*}v_{T_m'}(x', y')=v_{T_m}(x,y)=1+d(x-y)=1+d(x'-y')\end{align*}
where $x'=x+\alpha^m$ and $y'=y+\alpha^m$ with $x,y\in P_m$.

We construct $T_{m+1}$ for players $P_{m+1}=P_m\cup P_m'$ as the combination of tournaments $T_m$, $T_m'$, where at round $m+1$, the winner of $T_m$ plays the winner of $T_m'$. For this $T_{m+1}$, we see that for $x,y \in P_{m+1}$:
\begin{align*}
    v_{T_{m+1}}(x,y) &= v_{T_{m}}(x,y) = 1 + d(x-y) && \text{if }x,y \in P_m \\
    v_{T_{m+1}}(x,y) &= v_{T_{m}'}(x,y) = 1 + d(x-y) &&\text{if } x,y \in P_m' \\
    v_{T_{m+1}}(x,y) &= 1 + m = 1 + d(x-y) &&\text{if } x \in P_m, y \in P_m' \text{ or } x \in P_m', y \in P_m
\end{align*}
This finishes the induction step. Taking $T = T_k$ gives the desired tournament.
\end{proof}

The construction of $T$ with elements in $GF(2^3)$ is given in Figure \ref{fig:ko-construction}.
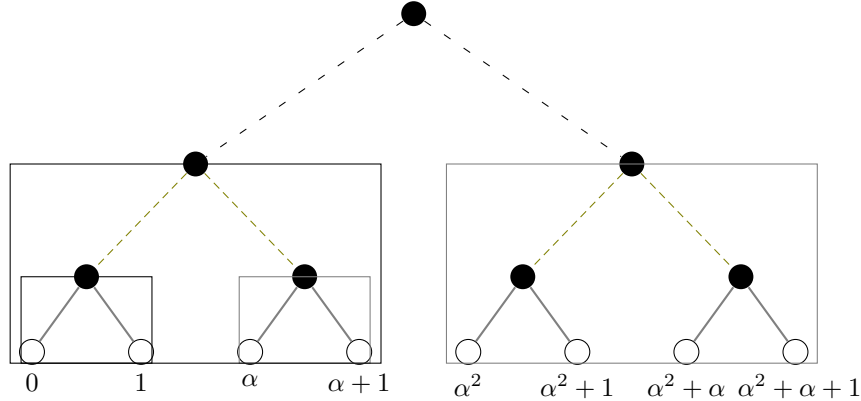
\begin{figure}[!h]
    \centering
        \begin{tikzpicture}
        \brackets{8}{1.45}{0,1,\alpha,\alpha+1,\alpha^2,\alpha^2+1,\alpha^2+\alpha,~\alpha^2+\alpha+1}
        \draw[draw=black] (0.9*1.45,-0.15) rectangle (2.1*1.45,1);
        \draw[draw=gray] (2.9*1.45,-0.15) rectangle (4.1*1.45,1);
        \draw[draw=black] (0.8*1.45,-0.15) rectangle (4.2*1.45,2.5);
        \draw[draw=gray] (4.8*1.45,-0.15) rectangle (8.2*1.45,2.5);
    \end{tikzpicture}
\caption{A knock-out tournament $T$ so that $v_T(x,y)=1+d(x-y)$}
\label{fig:ko-construction}
\end{figure}

\subsection{The result}
\label{sec:result}

By Lemma~\ref{lem:knockout}, we know there exists a knockout tournament $T$ on the elements of $GF(2^k)$ such that $v_T(x,y) = v_{T}(0,x-y) = 1+d(x,y)$ for all $x,y \in GF(2^k)$. In the following section, we argue that for each non-zero $z \in GF(2^k)$, the tournament $T(z)$ obtained from $T$ by replacing each player $x$ by $zx$ maintains the property that $v_{T(z)}(x,y) = v_{T(z)}(0,x-y)$. Then we show that $$\mathcal{T} = \{T(z) : z \in GF(2^k) \setminus \{0\}\}$$ is a stable SKC.



Let $T$ be a tournament satisfying Lemma~\ref{lem:knockout}, thus $v_T(x,y) = 1 + d(x-y)$ for all $x,y\in GF(2^k)$. Let $z\in GF(2^k)$ be non-zero and thus invertible. We construct $T(z)$ from $T$ by replacing each player $x$ with $zx$. As the map $x \mapsto zx$ is one-to-one, $T(z)$ is again a tournament whose players are the elements of $GF(2^k)$. Evidently we have $v_{T(z)}(x, y)=v_T(z^{-1}x, z^{-1}y)$ for all $x,y\in GF(2^k)$. It follows that 
$$v_{T(z)}(x, y)=v_T(z^{-1}x, z^{-1}y)=v_T(0, z^{-1}(x-y))=v_{T(z)}(0,x-y)$$
for all $x,y\in GF(2^k)$ and 
$$v_{T(z)}(0,y)=v_{T}(0,z^{-1}y)=1+d(z^{-1}y)$$
for all $y\in GF(2^k)$.
\begin{theorem}
\label{th:main}$\mathcal{T}:=\{T(z): z\text{ a nonzero element of } GF(2^k)\}$
is a stable SKC.
\end{theorem}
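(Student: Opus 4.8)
The plan is to reduce the stability condition to a single counting problem over the nonzero field elements, mirroring the Fano-plane argument of Section~\ref{sec:n=8}. First I would check that $\mathcal{T}$ is genuinely an SKC: there are $2^k-1=n-1$ nonzero elements $z$, hence exactly $n-1$ tournaments $T(z)$, as required by the definition. It then remains to verify the stability identity
$$\#\{T\in\mathcal{T}: v_T(x,x')=i\}=2^{i-1}$$
for every pair of distinct players $x,x'$ and every round $i\in\{1,\dots,k\}$.

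The reduction step exploits the two identities established immediately before the theorem. Since $v_{T(z)}(x,x')=v_{T(z)}(0,x-x')$ for all $z$, the count depends on the pair $(x,x')$ only through the single nonzero element $w:=x-x'$. Thus it suffices to prove, for each nonzero $w$ and each round $i$, that
$$\#\{z\neq 0: v_{T(z)}(0,w)=i\}=2^{i-1}.$$
Invoking $v_{T(z)}(0,w)=1+d(z^{-1}w)$, the event $v_{T(z)}(0,w)=i$ becomes equivalent to $d(z^{-1}w)=i-1$.

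The key move is a change of variable. As $z$ ranges over the nonzero elements of $GF(2^k)$ with $w$ fixed and nonzero, the product $u:=z^{-1}w$ ranges bijectively over the same set of nonzero elements, since multiplication by $w$ and inversion are both bijections of the multiplicative group. Hence
$$\#\{z\neq 0: d(z^{-1}w)=i-1\}=\#\{u\neq 0: d(u)=i-1\},$$
a quantity that no longer depends on $w$. This is precisely the mechanism that forces every pair of players to behave identically, and it is the heart of the argument.

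Finally I would evaluate $\#\{u: d(u)=i-1\}$ directly from the definition of degree. Writing $u=\sum_{j=0}^{k-1}u_j\alpha^j$, the condition $d(u)=i-1$ pins the leading coefficient $u_{i-1}=1$ and forces $u_j=0$ for $j>i-1$, while leaving the $i-1$ lower coefficients $u_0,\dots,u_{i-2}$ free; there are therefore $2^{i-1}$ such elements. Substituting yields the desired value $2^{i-1}$, completing the proof. I expect essentially no real obstacle here: the only point needing care is confirming that multiplication by the fixed nonzero $w$ genuinely permutes the nonzero elements, so that the $w$-dependence vanishes, after which the degree count is elementary bookkeeping. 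As a sanity check, $\sum_{i=1}^{k}2^{i-1}=2^k-1$ accounts for all nonzero elements, matching the total number of opponents of any player.
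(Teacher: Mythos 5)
Your proof is correct and takes essentially the same route as the paper's: reduce any pair to $(0,\,x-x')$ via the identity $v_{T(z)}(x,x')=v_{T(z)}(0,x-x')$, translate to the degree condition $d(z^{-1}w)=i-1$, and count using the fact that $z\mapsto z^{-1}w$ permutes the nonzero elements of $GF(2^k)$. If anything, your write-up is slightly more careful: you make the final degree count $\#\{u\neq 0: d(u)=i-1\}=2^{i-1}$ explicit (the paper's proof asserts the value $2^i$, an apparent typo for the correct constant $2^{i-1}$ that you obtain).
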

\begin{proof} We need to show that $\#\{T\in \mathcal{T}: v_T(x,x')=i\}=2^i$
for each pair of distinct players $x,x'\in GF(2^k)$  and each round $i=1,\ldots, k$.

If one of $x,x'$ is 0, say $\{x,x'\}=\{0,y\}$ with $y\neq 0$, then, for each $i=1, \ldots,k$,
\begin{align*}\#\{T\in \mathcal{T}: v_T(0,y)=i\}= \#\{z\in GF(2^k): z\neq 0, 1+d(z^{-1}y)=i\}.\end{align*} 
Substituting $z$ by $r^{-1}y$ this equals 
\begin{align*}&\#\{r^{-1}y\in GF(2^k): r\neq 0, 1+d(r)=i\}=\\
&\#\{r\in GF(2^k): r\neq 0, 1+d(r)=i\}=2^i\end{align*}
since the map $r\mapsto r^{-1}y$ is one-to-one. 

The general case reduces to the above special case, since each of the tournaments $T\in \mathcal{T}$ has $v_T(x,x')= v_T(0, x-x')$. Then
\begin{align*}\#\{T\in \mathcal{T}: v_T(x,x')=i\}=\#\{T\in \mathcal{T}: v_T(0, x-x')=i\}=2^i,\end{align*}
as required. \end{proof}

\ignore{
For example, if $k=3$, we have a tournament with the following matches.
\begin{itemize}
\item at stage 0: 0 vs 1; $\alpha$ vs. $\alpha+1$; $\alpha^2$ vs. $\alpha^2+1$; $\alpha^2+\alpha$ vs. $\alpha^2+\alpha+1$
\item at stage 1: winner of 0,1 vs. winner of $\alpha$, $\alpha+1$; winner of $\alpha^2$,$\alpha^2+1$ vs. winner of $\alpha^2+\alpha$, $\alpha^2+\alpha+1$
\item at stage 2: winner of 0,1, $\alpha$, $\alpha+1$ vs.  winner of $\alpha^2$,$\alpha^2+1$, $\alpha^2+\alpha$, $\alpha^2+\alpha+1$
\end{itemize}
}

We close this section with an example that constructs a stable SKC on $8$ players using the Galois group. 

\begin{example}
For the Galois group, we choose $q(X)=X^3+X+1$ as the irreducible polynomial over $\mathbb{Z}_2$ and set $q(\alpha) = 0$. The corresponding multiplication table is shown in Table \ref{tab:multiplic}.

\begin{table}[!h]
    \centering
    \tiny
    \begin{tabular}{|C*{7}{C}|} \hline 
         & 1 & \alpha & \alpha + 1 & \alpha^2 & \alpha^2+1 & \alpha^2 + \alpha & \alpha^2 + \alpha + 1\\ \hline
        0 & 0 & 0 & 0 & 0 & 0 & 0 & 0 \\ \hline
         1 & 1 & \alpha & \alpha + 1 & \alpha^2 & \alpha^2+1 & \alpha^2 + \alpha & \alpha^2 + \alpha + 1 \\ \hline
         \alpha & \alpha & \alpha^2 & \alpha^2 + \alpha & \alpha+1 & 1 & \alpha^2 + \alpha+1 & \alpha^2 + 1 \\ \hline
         \alpha + 1 & \alpha + 1 & \alpha^2 + \alpha & \alpha^2 + 1 & \alpha^2 + \alpha + 1 & \alpha^2 & 1 & \alpha \\ \hline
         \alpha^2 & \alpha^2 & \alpha + 1 & \alpha^2 + \alpha + 1 & \alpha^2+\alpha & \alpha & \alpha^2+1 & 1 \\ \hline
         \alpha^2 + 1 & \alpha^2 + 1 & 1 & \alpha^2 & \alpha & \alpha^2+\alpha+1 & \alpha + 1 & \alpha^2 + \alpha \\ \hline
         \alpha^2 + \alpha & \alpha^2 + \alpha & \alpha^2 + \alpha+1 & 1 & \alpha^2+1 & \alpha + 1 & \alpha & \alpha^2 \\ \hline
         \alpha^2 + \alpha + 1 & \alpha^2 + \alpha + 1 & \alpha^2+1 & \alpha & 1 & \alpha^2+\alpha & \alpha^2 & \alpha+1 \\ \hline
    \end{tabular}
    \caption{Multiplication on $GF(2^3)$}
    \label{tab:multiplic}
\end{table}

Table \ref{tab:multiplic} essentially gives the seedings for the SKC, since the row for multiplication by $z$ presents the seeding for $T(z)$. Upon replacing each polynomial with the number specified in \Cref{tab:galteams}, we get the SKC of Table \ref{tab:GaloisSKO}.

\begin{table}
\centering
    \begin{tabular}{|*{8}{C}|}  \hline 
         0 & 1 & \alpha & \alpha + 1 & \alpha^2 & \alpha^2+1 & \alpha^2 + \alpha & \alpha^2 + \alpha + 1\\ \hline
         0&1&4&5&2&3&6&7\\
         \hline
         \end{tabular}
         \caption{From Galois to teams}
         \label{tab:galteams}
\end{table}

\ignore{Take any permutation of $1,\dots,7$, and let the $i$-th element in the permutation correspond to the $i$-th element in the first row. Apply this correspondence to all of the values in the Table, to get $7$ rows with a permutation of $1,\dots,7$. Starting with the $0$ player and reading each of these $7$ rows, gives the seeding of $7$ knock-out tournaments, that together form a stable SKC. 

If we let the first row correspond to the first row of Table \ref{tbl:stable8}, that is $145-2367$, omitting the $0$ player, than the knockout schemes can be read as follows:
}
Comparing the SKC from Table \ref{tbl:stable8} with the one shown in Table \ref{tab:GaloisSKO}, we see that the knockout tournaments are the same and merely permuted.
\begin{table}[!h]
    \centering
\begin{tabular}{|CCCC|}
\hline
   \begin{array}{c}\text{Knockout} \\ \text{Tournament}\end{array} & \text{Seeding} & \begin{array}{c}\text{Knockout} \\  \text{Tournament}\end{array} & \text{Seeding} \\ \hline
     1 & 0145-2367 & 5 & 0312-4756\\
     2 & 0426-5173 & 6 & 0671-3542 \\
     3 & 0563-7214 & 7 & 0734-1625 \\
     4 & 0257-6431 & & \\ \hline
\end{tabular}
    \caption{SKC constructed from \Cref{tab:multiplic}}
    \label{tab:GaloisSKO}
\end{table}
\end{example}

\section{Stable SKC on 16 and 32 players}
\label{sec:1632}
In this section we use the construction of the previous section to generate an SKC on $16$ and one on $32$ players. For notational purposes, we enumerate the first $10$ players by $0,\dots,9$ and continue with $a,b$ up until $f$ in the case of $16$ and $v$ in the case of $32$ teams. By doing this, we can visualize the seedings as a string of length $16$ ($32$) where each character is one player.

The seedings are shown in \Cref{tab:SKC16,tab:SKC32}.

\begin{table}[!h]
    \centering
    \begin{tabular}{|CC|} \hline
         \begin{array}{c}\text{Knockout} \\ \text{Tournament}\end{array} & \text{Seeding}  \\ \hline
         1 & 0123-4567-89ab-cdef \\
         2 & 0246-8ace-3175-b9fd \\
         3 & 0365-cfa9-b8de-7412 \\
         \hline
         4 & 048c-37bf-62ea-51d9 \\
         5 & 05a\f-72d8-eb41-9c36 \\
         6 & 06ca-bd71-539f-e824 \\
         7 & 07e9-\f816-da34-25cb \\
         \hline
         8 & 083b-6e5d-c4f7-a291 \\
         9 & 0918-2b3a-4d5c-6f7e \\
         10 & 0a7d-e493-f582-1b6c \\
         11 & 0b5e-a1\f 4-7c29-d683 \\
         \hline
         12 & 0cb7-59e2-a61d-f348 \\
         13 & 0d94-1c85-2fb6-3da7 \\
         14 & 0ef1-d32b-9768-4ab5 \\
         15 & 0fd2-964b-1ec3-875a \\ \hline
    \end{tabular}
    \caption{Balanced SKC on 16 players}
    \label{tab:SKC16}
\end{table}

\renewcommand{\arraystretch}{0.8}
\begin{table}[!h]
    \centering
    \sffamily
    \begin{tabular}{|CC|} \hline
         \begin{array}{c}\text{Knockout} \\ \text{Tournament}\end{array} & \text{Seeding}  \\ \hline
1 & 0123-4567-89ab-cdef-ghij-klmn-opqr-stuv\\
2 & 0246-8ace-gikm-oqsu-5713-df9b-lnhj-tvpr\\
3 & 0365-cfa9-orut-knih-lmjg-pqvs-deb8-1274\\ \hline
4 & 048c-gkos-51d9-lhtp-ae26-quim-fb73-vrnj\\
5 & 05af-khur-d872-psjm-qvgl-eb41-nito-369c\\
6 & 06ca-ouki-ljpv-db17-f935-nhrt-qsmg-24e8\\
7 & 07e9-sril-tqjk-16f8-vohm-34da-25cb-upgn\\ \hline
8 & 08go-5dlt-a2qi-f7vn-ks4c-hp19-ume6-rjb3\\
9 & 09ir-18jq-2bgp-3aho-4dmv-5cnu-6fkt-7els\\
10 & 0aku-d7pj-qge4-nt39-hr5f-sm82-b1vl-6cio\\
11 & 0bmt-92vk-ip4f-rgd6-1ans-83ul-jo5e-qhc7\\ \hline
12 & 0cok-lpd1-f3nr-qm2e-ui6a-b7jv-ht95-48sg\\
13 & 0dqn-hsb6-7atg-mrc1-e3kp-vi58-94ju-ol2f\\
14 & 0esi-tj1f-vh3d-2cug-rl79-68qk-4aom-pn5b\\
15 & 0fuh-pm78-no96-e1gv-b4lq-itc3-sj2d-5ark\\ \hline
16 & 0g5l-aqfv-k4h1-uerb-dt8o-7n2i-p9sc-j3m6\\
17 & 0h7m-ev9o-sdra-i3l4-tcqb-j2k5-1g6n-fu8p\\
18 & 0i1j-2g3h-4m5n-6k7l-8q9r-aobp-cudv-esft\\
19 & 0j3g-6l5m-cvfs-ap9q-obr8-udte-k7n4-i1h2\\ \hline
20 & 0kdp-qen3-h5s8-bv6i-7jau-t9g4-m2rf-co1l\\
21 & 0lfq-ubh4-pcm3-7i8t-n2od-9s6j-er1k-g5va\\
22 & 0m9v-i4rd-1n8u-j5qc-2kbt-g6pf-3las-h7oe\\
23 & 0nbs-m1ta-9u2l-v8k3-i5pe-4jfo-rcg7-dq6h\\ \hline
24 & 0old-fnq2-u6bj-h94s-p1ck-me3r-7via-8gt5\\
25 & 0pne-bis5-mf1o-t4aj-9gu7-2rlc-v68h-kd3q\\
26 & 0qhb-7tmc-ekv5-9jo2-s6dn-r1ag-i83p-lf4u\\
27 & 0rj8-3ogb-6tle-5umd-cnv4-fks7-ahp2-9iq1\\ \hline
28 & 0st1-v32u-r76q-4op5-jfei-cghd-8kl9-nbam\\
29 & 0tv2-r64p-jech-8lna-3us1-o57q-gdfi-bmk9\\
30 & 0up7-n9eg-blic-s25r-m8fh-1vo6-t34q-akjd\\
31 & 0vr4-jc8n-3so7-gfbk-6pt2-laeh-5qu1-m9di\\ \hline
  \end{tabular}
\caption{Balanced SKC on 32 players}
\label{tab:SKC32}
\end{table}

\newpage

\section{Discussion}
\label{sec:conclusion}

We have analyzed a novel tournament design that is used in practice, and that can be seen as a combination of a knockout tournament and a round robin tournament; we call it a Serial Knockout Competition (SKC). From the viewpoint of fairness an attractive property of an SKC is stability: whether or not pairs of players can meet equally often in the rounds of the SKC. We have shown that this is always possible. Interestingly, one easily observes that the implementation of the SKC used in the PDC Premier League is not stable.  

We remark here that the construction to create stable SKC's does not generate a unique tournament - for example, the order of the individual knockout tournaments can be changed without impacting the stability of the SKC. Also, within each knockout tournament, a tournament $T(s)$ with seeding $s$ can be replaced by $T(s')$ as long as $v_{T(s)}=v_{T(s')}$. Thus, not all stable SKC's are equal and from an organizer's point of view, there might be additional constraints allowing one to prefer one stable SKC over another.


\bigskip
\textbf{Acknowledgement}{The research of Frits C.R. Spieksma was partly funded by the NWO Gravitation Project NETWORKS, Grant Number 024.002.003.}
\newpage

\printbibliography

\end{document}